\definecolor{mygray}{gray}{0.85}
\renewcommand{\leq}{\leqslant}
\def\subsection{\@startsection{subsection}{3}%
  \z@{.5\linespacing\@plus.7\linespacing}{.3\linespacing}%
  {\bfseries\centering}}
\def\subsubsection{\@startsection{subsubsection}{3}%
  \z@{.5\linespacing\@plus.7\linespacing}{.3\linespacing}%
  {\centering}}
\def\myfnt{\ifx\protect\@typeset@protect\expandafter\footnote\else\expandafter\@gobble\fi}
\newtheorem{theorem}{Theorem}%[section]
\newtheorem{definition}[theorem]{Definition}
\newtheorem{fact}[theorem]{Fact}
\newtheorem{notation}[theorem]{Notation}
\newtheorem{cclaim}[theorem]{Claim}
\newcounter{claimcounter}
\numberwithin{claimcounter}{theorem}
\begin{document}
%%%%%%%%%%%%%%%%%%

\begin{abstract} For every countable structure $M$ we construct an $\aleph_0$-stable countable structure $N$ such that $Aut(M)$ and $Aut(N)$ are topologically isomorphic. This shows that it is impossible to detect any form of stability of a countable structure $M$ from the topological properties of the Polish group $Aut(M)$.
\end{abstract}

\title{Automorphism Groups of Countable Stable Structures}
\thanks{Partially supported by European Research Council grant 338821. No. 1107 on Shelah's publication list.}

\author{Gianluca Paolini}
\address{Einstein Institute of Mathematics,  The Hebrew University of Jerusalem, Israel}

\author{Saharon Shelah}
\address{Einstein Institute of Mathematics,  The Hebrew University of Jerusalem, Israel \and Department of Mathematics,  Rutgers University, U.S.A.}

\date{\today}
\maketitle

\section{Introduction}

	As well known, the non-Archimedean Polish groups--those Polish groups admitting a basis at the identity of open subgroups--are precisely the Polish groups that can be represented as the automorphism groups of countable structures. A common theme of the last decades has been the search for connections between model theoretic properties of such structures and properties of their automorphism groups. 
	
	For example, the result of Engeler, Ryll-Nardzewski and Svenonius stating that the theory of a countable structure is countably categorical if and only if its automorphism group is oligomorphic \cite{1, 2, 3}, or the theorem of Ahlbrandt and Ziegler stating that two countable structures are bi-interpretable if and only if their automorphism groups are topologically isomorphic \cite{ziegler}. For more advanced result in this direction dealing with reconstruction up to bi-definability see \cite{Sh_Pa_recon, rubin}.
	
%	Or, among the most defined results in this direction, the result of Rubin stating that Let $M$ and $N$ be countable $\aleph_0$-categorical structures with no algebraicity and suppose that $M$ has a $\forall \exists$-interpretation. Then $Aut(M) \cong Aut(N)$ if and only if $M$ and $N$ are bi-definable., a version of which has been recently proved by the author under the assumption of small index property.

	Perhaps the pre-eminent model theoretic property is {\em stability}. In the present study we show that any attempt at a topological characterization of the group of automorphisms of a countable {\em stable} structure is doomed to fail. More strongly:
	
	\begin{theorem}\label{main_th} For every countable\footnote{In the present paper we consider only structures in a countable language.} structure $M$ there exists an $\aleph_0$-stable countable structure $N_M = N$ such that $Aut(M)$ and $Aut(N)$ are topologically isomorphic with respect to the naturally associated Polish group topologies.
\end{theorem}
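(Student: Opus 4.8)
The plan is to take an arbitrary countable structure $M$ and encode its automorphism group into an $\aleph_0$-stable structure $N$ by building a combinatorial object—essentially a many-sorted or graph-like structure—whose automorphisms are forced to act exactly as $\mathrm{Aut}(M)$ acts, while the underlying structure itself has a tame (totally categorical or $\aleph_0$-stable) theory. The key idea is that $\aleph_0$-stability is a property of the \emph{theory} of $N$, but the automorphism group is determined by the \emph{rigidification} we impose through the relational structure; these two can be decoupled. Concretely, I would replace each element of $M$ and each instance of a relation or function of $M$ by a suitable "gadget"—for example, attaching to each tuple in a relation a distinguishing configuration built from an $\aleph_0$-stable template (such as a disjoint union of finite structures, an infinite set with an equivalence relation with infinitely many infinite classes, or a vector-space-like object)—chosen so that (i) any automorphism of $N$ restricts to an automorphism of the copy of $M$ sitting inside $N$, and (ii) conversely every automorphism of $M$ extends uniquely to $N$.

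First I would fix a canonical $\aleph_0$-stable "coding" structure, verify its stability directly (e.g.\ via counting types or quantifier elimination), and set up the interpretation so that the relations of $M$ are recovered definably from $N$ but in a way that does not introduce an order or the independence/strict-order properties. Second, I would define a natural bijection $\Phi \colon \mathrm{Aut}(M) \to \mathrm{Aut}(N)$ and check it is a group isomorphism: every $f \in \mathrm{Aut}(M)$ induces a map on the gadgets, and because the gadgets are rigid relative to the base points they decorate, the extension is unique, giving injectivity and surjectivity. Third—and this is the crucial topological step—I would show $\Phi$ is a homeomorphism for the respective pointwise-convergence topologies: a basic open set in $\mathrm{Aut}(N)$ fixing finitely many points of $N$ pulls back to a set fixing finitely many points of $M$ (the base points those gadgets decorate), and conversely, since each element of $M$ has a finite "neighborhood of coding elements" determining the local behavior. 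Continuity in both directions then follows from the fact that the stabilizers of finite sets form neighborhood bases on both sides and $\Phi$ matches them up.

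The main obstacle I anticipate is arranging the coding so that $N$ is genuinely $\aleph_0$-stable rather than merely stable or even just tame in a weaker sense: naive codings (for instance, using a graph that literally records an arbitrary relation of $M$) will typically inherit the combinatorial complexity of $M$ and hence fail to be stable at all. The trick must be that the coding records only \emph{enough} information to reconstruct $\mathrm{Aut}(M)$'s action topologically—that is, it preserves the lattice of finite-support stabilizers—without making the atomic diagram of $M$ definable in a type-counting-heavy way. I would therefore lean on a construction where each relation-instance is witnessed by an element of an independent "free" family (so that types over finite sets remain countable and the theory eliminates quantifiers down to something like an infinite set with unary predicates and a single cross-sorting incidence relation), and I would spend the bulk of the verification confirming the type-counting bound $|S_n(A)| \le \aleph_0$ for countable $A$, which is where the delicate design decisions in the gadgets get paid off.
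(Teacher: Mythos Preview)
Your overall architecture matches the paper's: build $N$ by decorating each $n$-tuple of $M$ with a gadget, verify that automorphisms of $N$ restrict to automorphisms of $M$ and that every automorphism of $M$ extends uniquely, and check $\aleph_0$-stability by counting types. You also correctly identify the central tension: if the gadgets make the relations of $M$ first-order definable in $N$, then $N$ interprets $M$ and inherits any instability $M$ has.

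The gap is that you never actually resolve that tension. Your last paragraph gestures at ``an element of an independent free family'' and ``quantifier elimination down to an infinite set with unary predicates and a cross-sorting incidence relation'', but a single incidence relation recording which tuples lie in $R^M$ is precisely the naive coding you warned against: it makes $R^M$ first-order recoverable and the stability argument collapses. What is missing is a mechanism by which the relations of $M$ are visible to \emph{automorphisms} of $N$ yet invisible to the \emph{first-order theory} of $N$.

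The paper's device is this: to every $n$-tuple $\bar a$ of $M$ and every relation symbol $P_{(n,\ell)}$, attach an equivalence class of new elements, of size $\omega+1$ if $M\models P_{(n,\ell)}(\bar a)$ and of size $\omega$ otherwise. Unary functions $G_{(n,\ell,j)}$ (for $j<\omega$) name the $j$-th element of each class, so the extra ``$\omega$-th'' element, when it exists, is the unique one satisfying $G_{(n,\ell,j)}(x)\neq x$ for all $j<\omega$. This is a type, not a formula: first-order logic cannot tell a class of size $\omega$ from one of size $\omega+1$, so $Th(N)$ is $\aleph_0$-stable (the type count over a countable model is easy). But any automorphism of $N$ must send an element realizing that infinitary type to another such element, and hence must send tuples in $P_{(n,\ell)}^M$ to tuples in $P_{(n,\ell)}^M$. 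In short, the relations of $M$ are $\mathfrak{L}_{\omega_1,\omega}$-definable in $N$ but not first-order definable; that asymmetry is the whole proof, and your proposal does not yet contain it. Once you have it, the uniqueness of the extension and the continuity of $\pi\mapsto\hat\pi$ follow exactly as you outlined.
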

	
	In order to witness the continuity of the isomorphism constructed in the proof of Theorem \ref{main_th} we use a new notion of interpretability, which we call $\mathfrak{L}_{\omega_1, \omega}$-semi-interpretability. In fact, in our proof, given a countable structure $M$, we construct an $\aleph_0$-stable structure $N_M = N$ and show that not only there is an isomorphism of  topological groups $\alpha: Aut(M) \rightarrow Aut(N)$, but that this $\alpha$ can be chosen to be such that it is induced by the map witnessing that $N$ is $\mathfrak{L}_{\omega_1, \omega}$-semi-interpretabile in $M$. Although the continuity of the isomorphism constructed in the proof of Theorem~\ref{main_th} is evident from the construction we believe that this new notion of interpretability is interesting per se, and that it gives more canonicity to our construction.
	
	Finally, the theory $Th(N_M)$ of Theorem \ref{main_th} can be shown to be NDOP and NOTOP, but this will not be proved here, since it is outside of the scope of this study.
	
%	In \cite[Chapter 6]{ros} Rosendal isolates a property of topological groups which he calls locally boundedness\footnote{This property was previously known as local (OB), this is the terminology used in \cite{zie}.} and proves that if $M$ is the countable, saturated model of an $\aleph_0$-stable theory then $Aut(M)$ is locally bounded.
%%
%	Again in \cite{ros}, Rosendal asks if the property of locally boundedness is satisfied by the group of automorphisms of any countable model of an $\aleph_0$-stable theory. In \cite{zie} Zielinski answers this question in the negative by exhibiting a countable model of an $\aleph_0$-stable theory whose group of automorphisms is {\em not} locally bounded\footnote{In \cite{zie} it is proved more, i.e. that the model can be taken to be atomic.}. 

\section{Proofs}
	
	 To make the exposition complete we first introduce the classical notion of first-order interpretability (Definition \ref{classical_interpretability}), and then define the notion of $\mathfrak{L}_{\omega_1, \omega}$-semi-interpretability (Definition \ref{semi_interpretability}). After this, we state two facts (Facts~\ref{semi_int_fact} and \ref{cont_homo}) which will be crucially used in the proof of Theorem~\ref{main_th} and then proceed to the proof.
	
	\begin{definition}\label{classical_interpretability} Let $M$ and $N$ be models. We say that $N$ is interpretable in $M$ if for some $n < \omega$ there are:
	\begin{enumerate}[(1)]
	\item a $\emptyset$-definable subset $D$ of $M^n$;
	\item a $\emptyset$-definable equivalence relation $E$ on $D$;
	\item a bijection $\alpha: N \rightarrow D/E$ such that for every $m < \omega$ and $\emptyset$-definable subset $R$ of $N^m$ the subset of $M^{nm}$ given by:
	$$\hat{R} = \{ (\bar{a}_1, ..., \bar{a}_m) \in (M^n)^m : (\alpha^{-1}(\bar{a}_1/E), ...,  \alpha^{-1}(\bar{a}_m/E)) \in R\}$$
is $\emptyset$-definable in $M$.
\end{enumerate}
\end{definition}

	\begin{notation} Let $\tau$ be a language.
	\begin{enumerate}[(1)]
	\item For $R \in \tau$ a predicate, we denote by $k(R)$ the arity of $R$.
	\item Given a $\tau$-structure $M$ and a $\tau$-formula $\varphi(\bar{x}) = \varphi(x_0, ..., x_{n-1})$, we let:
	$$\varphi(M) = \{ \bar{a} \in M^n : M \models \varphi(\bar{a})\}.$$
	\item Given a $\tau$-structure $M$, we denote by $|M|$ the domain of $M$ (although we will be sloppy in distinguishing between the two), and by $||M||$ the cardinality of $M$.
	\item Given a $\tau$-structure $M$ and $A \subseteq M$, we denote by $Aut(M/A)$ the set of automorphisms of $M$ which are the identity on $A$.
	\item We denote by $\mathfrak{L}_{\omega_1, \omega}(\tau)$ the logical language $\mathfrak{L}_{\omega_1, \omega}$ (admitting countable disjunctions and countable conjuctions) with respect to the vocabulary $\tau$.
	\item Given a collection $\Delta$ of formulas in one free variable of the language $\mathfrak{L}_{\omega_1, \omega}(\tau)$, a $\tau$-structure $M$, and $a \in M$, we let:
	$$tp_{\Delta}(a, \emptyset, M) = \{ \varphi \in \Delta : M \models \varphi(a)\}.$$
\end{enumerate}
\end{notation}

	\begin{definition}\label{semi_interpretability} Let:
	\begin{enumerate}[(i)]
	\item $\tau_\ell$ ($\ell = 1, 2$) be relational languages;
	\item $\Delta_{\ell} \subseteq \mathfrak{L}_{\omega_1, \omega}(\tau_{\ell})$ be sets of formulas, for $\ell = 1, 2$;% of the form $\varphi(\bar{x}_m)$, for $\bar{x}_m = ( x_i : i < m \leq \omega)$;
	\item $\Delta_2 = \{ \varphi \in \mathfrak{L}_{\omega_1, \omega}(\tau_{2}): \varphi \text{ is an atomic $\tau_2$-formula in one free variable} \}$;
	\item $M_\ell$ be $\tau_{\ell}$-structures, for $\ell = 1, 2$.
	\end{enumerate}
We say that $M_2$ is $\Delta_1$-interpretable in $M_1$ by the scheme $\mathfrak{s}$ and function $\bar{F}$ when:
	\begin{enumerate}[(A)]
	\item $\mathfrak{s} = \{ \mathfrak{s}(p) : p \in \mathfrak{S}_{M_2}\} \cup \{ \mathfrak{s}(R, \bar{p}) : R \in \tau_2, \bar{p} = (p_{\ell} : \ell < k(R)) \in \mathfrak{S}_{M_2}^{k(R)}\}$, where:
	\begin{enumerate}[(a)]
	\item $\mathfrak{S}_{M_2} = \{ tp_{\Delta_2}(a, \emptyset, M_2) : a \in M_2\}$;
	\item $\mathfrak{s}(p) = (r_p(\bar{x}_{m(p)}), E_p(\bar{y}_{m(p)}, \bar{z}_{m(p)})) \in \Delta_1 \times \Delta_1$, $m(p) < \omega$, and $E_{p}(M_1)$ is a non-empty equivalence relation on $r_p(M_1)$;
	\item $\mathfrak{s}(R, \bar{p})$ is a $\tau_1$-formula from $\Delta_1$ of the form $\varphi_{(R, \bar{p})}(\bar{x}^0_{m(p_0)}, ..., \bar{x}^{k-1}_{m(p_{k-1})})$, with $\bar{x}^i_{m(p_i)} = (x^i_0, ..., x^i_{m(p_i)-1})$, for every $i < k = k(R)$;
	\end{enumerate}
	\item $\bar{F} = (F_p : p \in \mathfrak{S}_{M_2})$, where:
	\begin{enumerate}[(a)]
	\item $F_p$ is a one-to-one function from $p(M_2) = \{ a \in M_2 : p = tp_{\Delta_2}(a, \emptyset, M_2) \}$ onto $r_p(M_1)/E_p(M_1)$;
	\item for every predicate $R$ of $\tau_2$ we have: if $k = k(R)$, $\bar{a} \in M_2^{k}$, and, for every $\ell < k$, $p_{\ell} = tp_{\Delta_2}(a_{\ell}, \emptyset, M_2)$, $\bar{b}_{\ell} \in r_{p_{\ell}}(M_{1})$ and $F_{p_{\ell}}(a_{\ell}) = \bar{b}_{\ell}/E_{p_{\ell}}(M_1)$, then:
	$$M_2 \models R(a_0, ..., a_{k-1}) \text{ iff } M_1 \models \varphi_{(R, \bar{p})}(\bar{b}_0, ..., \bar{b}_{k-1}).$$
	\end{enumerate}
\end{enumerate}
Finally, we say that $M_2$ is $\mathfrak{L}_{\omega_1, \omega}$-semi-interpretable in $M_1$ when $M_2$ is $\Delta_1$-interpretable in $M_1$ by the scheme $\mathfrak{s}$ and function $\bar{F}$ for some $\Delta_1$, $\mathfrak{s}$ and $\bar{F}$.
\end{definition}

	The proof of the following fact is essentially as in the case of first-order interpretability (cf. Definition \ref{classical_interpretability}).

	\begin{fact}\label{semi_int_fact} Let $M$ and $N$ be models, and suppose that $N$ is $\mathfrak{L}_{\omega_1, \omega}$-semi-interpretable in $M$. Then every $\pi \in Aut(M)$ induces a $\hat{\pi} \in Aut(N)$, and the mapping $\pi \mapsto \hat{\pi}$ is a continuous homomorphism of $Aut(M)$ into $Aut(N)$. 
\end{fact}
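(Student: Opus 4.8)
The plan is to mimic the standard argument for first‑order interpretability, adapting it to the semi‑interpretable setting of Definition~\ref{semi_interpretability}. Fix the data witnessing that $N$ is $\mathfrak{L}_{\omega_1,\omega}$‑semi‑interpretable in $M$, namely $\Delta_1$, the scheme $\mathfrak{s}$ and the function $\bar{F} = (F_p : p \in \mathfrak{S}_N)$. The key observation to exploit is that each $\pi \in Aut(M)$ preserves every formula of $\Delta_1 \subseteq \mathfrak{L}_{\omega_1,\omega}(\tau_1)$: automorphisms preserve all $\mathfrak{L}_{\omega_1,\omega}$‑formulas, so in particular each $\pi$ maps $r_p(M)$ to itself and respects the equivalence relation $E_p(M)$, hence induces a well‑defined permutation of the quotient $r_p(M)/E_p(M)$ for every $p$.

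\smallskip

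The construction of $\hat{\pi}$ then goes as follows. First I would note that $N = \bigsqcup_{p \in \mathfrak{S}_N} p(N)$, so it suffices to define $\hat{\pi}$ on each fiber $p(N)$ and check it is a bijection of $N$. For $a \in p(N)$, set $\hat{\pi}(a) = F_p^{-1}\big(\pi(F_p(a))\big)$, where $\pi$ acts on $r_p(M)/E_p(M)$ as in the previous paragraph and $F_p : p(N) \to r_p(M)/E_p(M)$ is the given bijection from clause (B)(a). Since $F_p$ is a bijection and $\pi$ permutes the quotient, $\hat{\pi}\restriction p(N)$ is a bijection of $p(N)$; assembling over all $p$ gives a bijection $\hat{\pi}$ of $N$. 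That $\hat{\pi}$ is an automorphism of $N$ is where clause (B)(b) enters: given a predicate $R \in \tau_2$ of arity $k$ and $\bar{a} \in N^k$ with types $p_\ell = tp_{\Delta_2}(a_\ell,\emptyset,N)$ and representatives $\bar{b}_\ell \in r_{p_\ell}(M)$ with $F_{p_\ell}(a_\ell)=\bar{b}_\ell/E_{p_\ell}(M)$, the equivalence $N \models R(\bar{a})$ iff $M \models \varphi_{(R,\bar{p})}(\bar{b}_0,\dots,\bar{b}_{k-1})$ holds, and applying $\pi$ (which preserves $\varphi_{(R,\bar{p})} \in \Delta_1$) transports this to the corresponding statement for $\hat{\pi}(\bar{a})$, giving $N \models R(\hat{\pi}(\bar{a}))$ iff $N \models R(\bar{a})$. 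The verification that $\pi \mapsto \hat{\pi}$ is a group homomorphism, and that $\widehat{id} = id$, is then immediate from the definition via $F_p$ and the functoriality of the quotient action.

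\smallskip

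The remaining point, and the one requiring the most care, is continuity. Both $Aut(M)$ and $Aut(N)$ carry the topology of pointwise convergence, with basic open neighborhoods of the identity given by pointwise stabilizers of finite sets. To show $\pi \mapsto \hat{\pi}$ is continuous it suffices, by the homomorphism property, to check continuity at the identity: given a finite $A \subseteq N$, I must produce a finite $B \subseteq M$ such that $\pi \restriction B = id$ implies $\hat{\pi} \restriction A = id$. For each $a \in A$ with type $p$, choose a representative $\bar{b}^a \in r_p(M)$ of the class $F_p(a)$, and let $B$ be the finite set collecting all coordinates of all these tuples $\bar{b}^a$. If $\pi$ fixes $B$ pointwise, then $\pi$ fixes each $\bar{b}^a$ and hence each class $\bar{b}^a/E_p(M) = F_p(a)$, so $\hat{\pi}(a) = F_p^{-1}(\pi(F_p(a))) = F_p^{-1}(F_p(a)) = a$; thus $\hat{\pi} \restriction A = id$, as required. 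I expect the main obstacle to be purely bookkeeping: keeping the indexing by $\Delta_2$‑types $p$ straight, ensuring the choices of representatives $\bar{b}^a$ are coherent, and confirming that the action of $\pi$ on the quotients is genuinely well defined, all of which rest on the single fact that automorphisms preserve $\mathfrak{L}_{\omega_1,\omega}$‑formulas.
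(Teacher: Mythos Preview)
Your proposal is correct and follows exactly the route the paper indicates: the paper does not spell out a proof of Fact~\ref{semi_int_fact} but simply remarks that it is ``essentially as in the case of first-order interpretability,'' and your argument is precisely that classical construction---define $\hat{\pi}$ on each fiber $p(N)$ via the bijections $F_p$ and the induced action on $r_p(M)/E_p(M)$, verify it respects relations through clause~(B)(b), and check continuity by pulling back finite sets to finite sets of representatives. The only point the paper implicitly assumes and you make explicit is that automorphisms of $M$ preserve all $\mathfrak{L}_{\omega_1,\omega}$-formulas, which is what makes the whole scheme go through in this setting.
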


	The following fact is well-known.

	\begin{fact}\label{cont_homo} Let $G$ and $H$ be Polish group and $\alpha: G \rightarrow H$ a group isomorphism. If $\alpha$ is continuous, then $\alpha$ is a topological isomorphism.
\end{fact}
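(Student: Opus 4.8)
The plan is to recognize this as the open mapping theorem for Polish groups and to prove that. Since $\alpha$ is a continuous group isomorphism, its set-theoretic inverse $\alpha^{-1}$ exists and is automatically a group homomorphism, so to conclude that $\alpha$ is a topological isomorphism it suffices to show that $\alpha^{-1}$ is continuous, equivalently that $\alpha$ is an open map. As $\alpha$ is bijective, this is precisely the assertion that a continuous surjective homomorphism between Polish groups is open, which I would establish directly by a Baire-category argument.

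First I would fix an arbitrary open neighbourhood $U$ of the identity $1_G$ and, using continuity of the group operations, choose a symmetric open neighbourhood $V$ of $1_G$ with $VV \subseteq U$. Since $G$ is separable (being Polish) it is Lindel\"of, so countably many left translates $\{g_n V : n < \omega\}$ cover $G$; applying the surjective homomorphism $\alpha$ gives $H = \bigcup_{n < \omega} \alpha(g_n)\,\alpha(V)$. Because $H$ is Polish, it is a Baire space, so this countable cover cannot consist entirely of meager sets; hence some translate $\alpha(g_n)\alpha(V)$, and therefore $\alpha(V)$ itself, is non-meager in $H$.

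The key step is then Pettis's theorem. Since $V$ is an open (in particular Borel) subset of the Polish group $G$ and $\alpha$ is continuous, $\alpha(V)$ is analytic and so has the Baire property. Being non-meager with the Baire property, Pettis's lemma yields that $\alpha(V)\,\alpha(V)^{-1}$ contains a neighbourhood of $1_H$. As $V$ is symmetric and $\alpha$ is a homomorphism, $\alpha(V)^{-1} = \alpha(V^{-1}) = \alpha(V)$, whence
$$\alpha(V)\,\alpha(V)^{-1} = \alpha(VV) \subseteq \alpha(U),$$
so $\alpha(U)$ contains a neighbourhood of the identity. Translating by elements of $G$ and using that $\alpha$ is a homomorphism then shows that $\alpha$ sends every open set to an open set, i.e. $\alpha$ is open. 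Consequently $\alpha^{-1}$ is continuous, and together with the continuity of $\alpha$ this makes $\alpha$ a homeomorphism, hence a topological isomorphism, as required.

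As for the main obstacle: everything reduces to producing non-meagerness together with the Baire property for $\alpha(V)$, and to the correct invocation of Pettis's lemma. Both appeals to Polishness are essential here---separability to obtain the countable cover, and completeness (through the Baire category theorem) to rule out that all pieces are meager---so the argument genuinely fails for arbitrary topological groups. An alternative to Pettis would be the successive-approximation argument familiar from the Banach-space open mapping theorem, passing from $\overline{\alpha(V)}$ back to $\alpha(VV)$ by exploiting completeness of $G$; I would prefer the Pettis route, as it isolates the category-theoretic content most cleanly.
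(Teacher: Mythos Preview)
Your argument is correct and is the standard Baire-category / Pettis proof of the open mapping theorem for Polish groups. The paper, however, does not give any proof of this fact at all: it is simply recorded as ``well-known'' and used as a black box, so there is no paper proof to compare against.
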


	We are now ready to prove Theorem \ref{main_th}.

	\begin{proof}[Proof of Theorem \ref{main_th}] Let $M$ be a countable model. We construct a countable model $N_M = N$ such that:
	\begin{enumerate}[(1)]
	\item\label{auto1} $N$ is $\mathfrak{L}_{\omega_1, \omega}$-semi-interpretable in $M$ (cf. Definition \ref{semi_interpretability});
	%\item\label{auto_ind1} every $\pi \in Aut(M)$ induces a $\hat{\pi} \in Aut(N)$;
	\item\label{auto_ind2} for every $\pi \in Aut(N)$ there is a unique $\pi_0 \in Aut(M)$ such that $\pi = \hat{\pi}_0$ (cf. Fact \ref{semi_int_fact});
	\item\label{stable_enumi} $N$ is $\aleph_0$-stable.
\end{enumerate}
Using Facts \ref{semi_int_fact} and \ref{cont_homo}, and items (\ref{auto1})-(\ref{auto_ind2}) above it follows that $Aut(M)$ and $Aut(N)$ are topologically isomorphic, and thus by (\ref{stable_enumi}) we are done.
\newline We then proceed to the construction of a model $N_M = N$ as above. First all notice that without loss of generality\footnote{Recall that in this paper we only consider structures in a countable language.} we can assume that $M$ is a relational structure in a language $\tau(M) = \{ P_{(n, \ell)} : n < n_* \leq \omega, \ell < \ell_n \leq \omega \}$, where the predicates $P_{(n, \ell)}$ are $n$-ary predicates, and, for transparency, we assume that if $M \models P_{(n, \ell)}(\bar{a})$, then $\bar{a}$ is without repetitions. We construct a structure $N$ in the following language $\tau(N)$:
\begin{enumerate}[(i)]
	\item $c \in \tau(N)$ is a constant;
	\item $P \in \tau(N)$ is a unary predicate;
	\item for $n < n_* \leq \omega$ and $\ell < \ell_n \leq \omega$, $Q_{(n, \ell)} \in \tau(N)$ is a unary predicate;
	\item for $n < n_* \leq \omega$ and $\ell < \ell_n \leq \omega$, $E_{(n, \ell)} \in \tau(N)$ is a binary predicate;
	\item for $n < n_* \leq \omega$, $\ell < \ell_n \leq \omega$ and $\iota < n$, $F_{(n, \ell, \iota)} \in \tau(N)$ is a unary function;
	\item for $n < n_* \leq \omega$, $\ell < \ell_n \leq \omega$ and $j < \omega$, $G_{(n, \ell, j)} \in \tau(N)$ is a unary function.
\end{enumerate}
We define the structure $N$ as follows:
\begin{enumerate}[(a)]
	\item $|N|$ (the domain of $N$) is the disjoint union of:
	$$P^N \cup \{ c^N = e \} \cup \{ Q^N_{(n, \ell)} : n < n_* \leq \omega \text{ and } \ell < \ell_n \leq \omega \};$$
	\item $P^N = |M|$ (the domain of $M$);
	\item $Q^N_{(n, \ell)} = \{ (n, \ell, i, a_0, ..., a_{n-1}) : a_{t} \in M, i \leq \omega, (a_0, ..., a_{n-1}) \notin P^{M}_{(n, \ell)} \Rightarrow i < \omega \}$;
	\item $E^N_{(n, \ell)} = $
	$$\{ ((n, \ell, i_1, \bar{a}), (n, \ell, i_2, \bar{a})) : i_1, i_2 \leq \omega, (n, \ell, i_{t}, \bar{a} = a_0, ..., a_{n-1}) \in Q^N_{(n, \ell)} \};$$
%	\item $E^N_{(n, \ell)} = \{ ((n_1, \ell_1, i_1, a^1_0, ..., a^1_{n-1}), (n_2, \ell_2, i_2, a^2_0, ..., a^2_{n-1})) : n_1 = n = n_2, \ell_1 = \ell = \ell_2, \text{ and } a^1_{\iota} = a^2_{\iota}, \text{ for every } \iota < n \}$;
	\item for $\iota < n$, $F_{(n, \ell, \iota)}(x) = \begin{cases} 
a_{\iota} \;\;\; \text{ if } x = (n, \ell, i, a_0, ..., a_{n-1}),\\
e \;\;\;\;\, \text{ otherwise};
\end{cases}$
\item for $j < \omega$, $G_{(n, \ell, j)}(x) = \begin{cases} 
(n, \ell, j, a_0, ..., a_{n-1}) \;\;\; \text{ if } x = (n, \ell, i, a_0, ..., a_{n-1}),\\
e \;\;\;\;\;\;\;\;\;\;\;\;\;\;\;\;\;\;\;\;\;\;\;\;\;\;\;\;\;\;\;\, \text{ otherwise}.
\end{cases}$
\end{enumerate}
We now prove items (1)-(3) from the list at the beginning of the proof. Item (3) is proved in Claim \ref{claim_stability}.
\noindent We prove item (2). Let $\pi \in Aut(N)$ and, for $a, b \in M$, let $\pi_0(a) = b$ iff $\pi(a) = b$. Clearly $\pi_0 \in Sym(M)$. For the sake of contradiction, suppose that $\pi_0 \notin Aut(M)$. Replacing $\pi$ with $\pi^{-1}$, we can assume without loss of generality that there are $n < n_* \leq \omega$, $\ell < \ell_n \leq \omega$, $\bar{a} = (a_0, ..., a_{n-1}) \in M^{n}$ and $\bar{b} = (b_0, ..., b_{n-1}) \in M^{n}$ such that $\pi_0(\bar{a}) = \bar{b}$, $M \models P_{(n, \ell)}(\bar{a})$ and $M \not\models P_{(n, \ell)}(\bar{b})$. Then the element $(n, \ell, \omega, a_0, ..., a_{n-1}) \in N$ realizes the type:
$$p = \{ F_{(n, \ell, \iota)}(x) = a_\iota : \iota < n\} \cup \{ G_{(n, \ell, j)}(x)\neq x : j < \omega \},$$
while the type:
$$q = \{ F_{(n, \ell, \iota)}(x) = b_\iota : \iota < n\} \cup \{ G_{(n, \ell, j)}(x)\neq x : j < \omega \},$$
is not realized in $N$, a contradiction. Hence, $\pi_0 \in Aut(M)$ and, easily, $\pi = \hat{\pi}_0$ (cf. Fact \ref{semi_int_fact}) and for every  $\pi_1 \in Aut(M)$ such that $\pi = \hat{\pi}_1$ we have that $\pi_0 = \pi_1$.
\newline Finally, we prove item (1). Let $(k_{(n, \ell, i)} : n < n_* \leq \omega, \ell < \ell_n \leq \omega,  i \leq \omega)$ be a sequence of natural numbers such that:
	$$(n_1, \ell_1, i_1) \neq (n_2, \ell_2, i_2) \text{ implies }1 < n_1 + k_{(n_1, \ell_1, i_1)} \neq n_2 + k_{(n_2, \ell_2, i_2)}.$$
Let also:
\begin{enumerate}[(i')]
	\item  $n + k_{(n, \ell, i)} = m(n, \ell, i)$;
	\item $\bar{x}_{m(n, \ell, i)} = (x_0, ..., x_{m(n, \ell, i) -1})$;
	\item  $\bar{y}_{m(n, \ell, i)} = (y_0, ..., y_{m(n, \ell, i) -1})$.
\end{enumerate}
Consider now the following formulas:
	\begin{enumerate}[(A)]
	\item $\varphi_0(x_0) : x_0 = x_0$;
	\item $\theta_0(x_0, y_0) : x_0 = y_0$;
	\item for $n < n_* \leq \omega$, $\ell < \ell_n \leq \omega$ and $i < \omega$ let:
	$$\varphi_{(n, \ell, i)}(\bar{x}_{m(n, \ell, i)}) : \bigwedge_{m < m(n, \ell, i)} x_m = x_m,$$
	$$\theta_{(n, \ell, i)}(\bar{x}_{m(n, \ell, i)}, \bar{y}_{m(n, \ell, i)}) : \bigwedge_{m < n} x_m = y_m;$$
	\item for $n < n_* \leq \omega$, $\ell < \ell_n \leq \omega$ and $i = \omega$ let:
	$$\varphi_{(n, \ell, i)}(\bar{x}_{m(n, \ell, i)}) : \bigwedge_{m < m(n, \ell, i)} x_m = x_m \wedge P_{(n, \ell)}(x_0, ..., x_{n-1}),$$
	$$\theta_{(n, \ell, i)}(\bar{x}_{m(n, \ell, i)}, \bar{y}_{m(n, \ell, i)}) : \bigwedge_{m < n} x_m = y_m.$$
\end{enumerate}
Notice now, that:
	\begin{enumerate}[(I)]
	\item $P^N = \varphi_0(M)/\theta_0(M)$;
	\item $Q^N_{n, \ell}$ is in bijection with $\bigcup \{ \varphi_{(n, \ell, i)}(M)/\theta_{(n, \ell, i)}(M) : i \leq \omega \}$.
\end{enumerate}
	Using this observation it is easy to see how to choose $\Delta_{M}$, $\mathfrak{s}$, and $\bar{F} = (F_p : p \in \mathfrak{S}_{N})$ as in Definition \ref{semi_interpretability} so as to witness that $N$ is $\mathfrak{L}_{\omega_1, \omega}$-semi-interpretable in $M$.
\end{proof}

	\begin{cclaim}\label{claim_stability} Let $N$ be as in the proof of Theorem \ref{main_th}. Then $Th(N)$ is $\aleph_0$-stable.	
\end{cclaim}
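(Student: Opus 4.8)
The plan is to verify the type-counting definition of $\aleph_0$-stability directly: working in a sufficiently saturated $\mathfrak{N} \models Th(N)$, I would show that $|S_1(A)| \le \aleph_0$ for every countable $A$. First I record the first-order features of the construction that pass to $\mathfrak{N}$: the predicates $P$, $c$ and the $Q_{(n,\ell)}$ are pairwise disjoint; on $Q_{(n,\ell)}$ the relation $E_{(n,\ell)}$ is an equivalence relation whose classes (the \emph{fibers}) are exactly the level sets of the coordinate map $\bar F_{(n,\ell)} = (F_{(n,\ell,0)},\dots,F_{(n,\ell,n-1)})$ into $P^n$; each $G_{(n,\ell,j)}$ is constant on fibers, maps into its fiber, and satisfies $G_{(n,\ell,j)}(x)\ne G_{(n,\ell,j')}(x)$ for $j\ne j'$; and all $F$, $G$ take value $e$ off the relevant $Q_{(n,\ell)}$. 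Thus, up to the harmless ``sortless'' elements that can appear only when $\tau(N)$ is infinite, the universe of $\mathfrak{N}$ is a pure set $P$ together with, for each $(n,\ell)$, a family of fibers over $P^n$; each fiber carries the $\emptyset$-definable enumeration $j \mapsto G_{(n,\ell,j)}(x)$, whose image I call the \emph{standard} elements, plus possibly further \emph{non-standard} elements $y$, those with $G_{(n,\ell,j)}(y)\ne y$ for all $j$.

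The decisive point, and the conceptual heart of the claim, is that first-order logic cannot detect the relations of $M$: the only trace of $M \models P_{(n,\ell)}(\bar a)$ in $N$ is the presence of a single ``$\omega$-element'' in the fiber over $\bar a$, and this element is singled out only by the \emph{infinitary} condition $\bigwedge_j G_{(n,\ell,j)}(x)\ne x$. I would make this precise through a homogeneity lemma: if $b,b'$ are non-standard elements of one fiber with $b,b'\notin A$, then $\mathrm{tp}(b/A)=\mathrm{tp}(b'/A)$. This is witnessed by the transposition $\sigma$ of $|\mathfrak{N}|$ swapping $b$ and $b'$ and fixing all else. Indeed, being non-standard, $b,b'$ lie in the range of no $F_{(n,\ell,\iota)}$ (whose range is in $P$) and of no $G_{(n',\ell',j)}$ (whose range is the standard elements), and they have identical images under every $F$ and $G$ and satisfy the same $E$-relations; hence $\sigma$ commutes with all function symbols and preserves all predicates, so $\sigma\in\mathrm{Aut}(\mathfrak{N})$, and it fixes $A$ pointwise. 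In particular the ``$\omega$-element'' arising from a relation of $M$ is, inside $\mathfrak{N}$, indistinguishable from any other non-standard element of its fiber; equivalently, by saturation every fiber acquires non-standard elements whether or not it came from a relation, so the wild combinatorics of $M$ is genuinely erased at the first-order level.

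With this in hand I would count $S_1(A)$ by the sort of a realization $b$, noting that $\mathrm{dcl}(A)$ is countable (close $A$ under the countably many $F$, $G$ and the constant $c$). If $b=e$ or $b\in\mathrm{dcl}(A)$ there are countably many options. If $b\in P\setminus\mathrm{dcl}(A)$, a relabeling of the pure set $P$ lifted to the uniformly structured fibers (legitimate because such $b$ is a coordinate of no element of $A$, so $A$ stays fixed) shows there is a single type. If $b\in Q_{(n,\ell)}$, its type is pinned down by finitely much data: the index $(n,\ell)$; the configuration over $A$ of its coordinate tuple $\bar F_{(n,\ell)}(b)\in P^n$ (each entry either an element of $\mathrm{dcl}(A)\cap P$ or ``new'', together with the equality pattern among the new entries); and its position in the fiber, namely a standard element $G_{(n,\ell,j)}(\cdot)$ for some $j<\omega$, or an element of $A$, or, by the homogeneity lemma, the unique type of a ``new'' non-standard element of that fiber. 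Each parameter ranges over a countable set, so only countably many types concentrate on each $Q_{(n,\ell)}$; summing over the countably many $(n,\ell)$ and adding the lone sortless type yields $|S_1(A)|\le\aleph_0$.

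Since $\tau(N)$ is countable, the bound $|S_1(A)|\le\aleph_0$ for all countable $A$ is exactly stability in $\aleph_0$, so $Th(N)$ is $\aleph_0$-stable (equivalently, totally transcendental), as required. The step I expect to be the main obstacle is the homogeneity lemma of the second paragraph: everything hinges on confirming that the passage to first-order logic really forgets the relational content of $M$, so that no first-order type over a countable set can recover it. The explicit transposition makes this rigorous; the remaining work, namely the bookkeeping for the sortless elements (which carry no structure, appearing only when $\tau(N)$ is infinite) and the coordinate-configuration count for $Q$-elements, is routine.
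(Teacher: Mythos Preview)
Your proposal is correct and follows essentially the same approach as the paper: count $1$-types over a countable parameter set by exhibiting automorphisms of a saturated model, the key point being that the $\omega$-element encoding $M\models P_{(n,\ell)}(\bar a)$ is first-order indistinguishable from any other non-standard element of its fiber. Your homogeneity lemma (via an explicit transposition) is the paper's observation $(\star_2)_{(n,\ell)}$, your lifting of permutations of $P$ is the paper's $(\star_1)$, and your ``standard at $j$'' case is $(\star_3)_{(n,\ell,j)}$; your explicit treatment of the sortless elements and the concrete transposition witness are mild elaborations on the paper's terser presentation.
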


	\begin{proof} Let $N_1$ be a countable model of $Th(N)$. It is enough to show that  there are only countably many $1$-types over $N_1$. To this extent, let $N_2$ be an $\aleph_1$-saturated model of $Th(N)$ such that every countable non-algebraic type is realized by $||N_2||$-many elements, and define the following equivalence relation $E^* = E^*_{(N_1, N_2)}$ on $N_2$:
	$$a E^* b \text{ iff } \exists \pi \in Aut(N_2/N_1) \text{ such that } \pi(a) = b.$$
We will show that the relation $E^*$ has $\aleph_0$ equivalence classes, clearly this suffices. To this extent, notice that:
\begin{enumerate}[$(\star_1)$]
\item if $\pi$ is a permutation of $P^{N_2}$ which is the identity on $P^{N_1}$, then there is an automorphism $\check{\pi}$ of $N_2$ over $N_1$ extending it (recall that $N_2$ is $\aleph_1$-saturated);
\end{enumerate}
\begin{enumerate}[$(\star_2)_{(n, \ell)}$]
\item if $b_1, b_2 \in E_{(n, \ell)}$, $(F_{n, \ell, \iota}(b_1) : \iota < n)$ and $(F_{n, \ell, \iota}(b_2) : \iota < n)$ realize the same $\{=\}$-type over $P^{N_1}$, and for $t = 1, 2$ we have $b_t \notin \{ G_{(n, \ell, j)}(b_{t}) : j < \omega\}$, then there exists $\pi \in Aut(N_2/N_1)$ such that $\pi(b_1) = b_2$;
\end{enumerate}
\begin{enumerate}[$(\star_3)_{(n, \ell, j)}$]
\item if $b_1, b_2 \in E_{(n, \ell)}$, $(F_{n, \ell, \iota}(b_1) : \iota < n)$ and $(F_{n, \ell, \iota}(b_2) : \iota < n)$ realize the same $\{=\}$-type over $P^{N_1}$, and for $t = 1, 2$ we have $G_{(n, \ell, j)}(b_{t}) = b_{t}$, then there exists $\pi \in Aut(N_2/N_1)$ such that $\pi(b_1) = b_2$.% and $\pi$ is the identity of $N_2-Q^{N_2}_{(n, \ell)}$;
\end{enumerate}
Now, using $(\star_1)$-$(\star_2)_{(n, \ell)}$-$(\star_3)_{(n, \ell, j)}$ and noticing that $n, \ell$ and  $j$ range over countable sets, it is easy to see that the relation $E^*$ defined above has $\aleph_0$ equivalence classes.
%is the union of the following sets $A, B, C_{(i,n, \ell)}, D_{(n, \ell)}$, where $i, n < \omega$ and $\ell < \ell_n$:
%	$$A = ,$$
%	$$B = $$
%	$$C_{(i,n, \ell)} = $$
%	$$D_{(n, \ell)} = $$
%Hence, the relation $E^*$ has $\aleph_0$ equivalence classes, as wanted.
\end{proof}

\end{document}